\newtheorem{definition}{Definition}[section]
\newtheorem{theorem}{Theorem}[section]
\newtheorem{proposition}[theorem]{Proposition}
\newtheorem{lemma}[theorem]{Lemma}
\newtheorem{corollary}[theorem]{Corollary}
\newtheorem{remark}[theorem]{Remark}
\newtheorem{example}[theorem]{Example}
\def\C{{\mathbb{C}}}
\def\R{{\rm R}}
\def\rS{{\rm S}}
\def\Z{{\mathbb{Z}}}
\def\Ch{{\rm Ch}}
\def\H{{\bf H}}
\def\D{{\mathcal{D}}}
\def\mC{{\mathcal{C}}}
\def\E{{\mathcal{E}}}
\def\cR{{\mathcal{R}}}
\def\cZ{{\mathcal{Z}}}
\title{A map from Lawson homology to Deligne Cohomology}
\author{Wenchuan Hu}
\date{September 29, 2009}
\address{Institute for Advanced Study\\
Einstein Drive\\
Princeton, N.J. 08540\\
U.S.A. }
\email{wenchuan@math.ias.edu}
\begin{document}

\begin{abstract}A natural  Abel-Jacobi type map from
Lawson homology to Deligne cohomology  for smooth
complex projective varieties is constructed by using the Harvey-Lawson ``spark"
complexes. We also compare this to  Abel-Jacobi type constructions by others.
\end{abstract}

\maketitle

\tableofcontents

\section{Introduction} Let $X$ be a complex projective variety of
dimension $n$.  Let ${\mC}_p(X)$ be the space of all effective algebraic $p$-cycles on $X$,
 i.e., an element $c\in {\mC}_p(X)$  is a finite sum $c=\sum n_i V_i$, where $n_i$ is a
nonnegative integer and $V_i$ is an irreducible variety of dimension $p$.
Let ${\cZ}_p(X)$ be the space of all algebraic $p$-cycles on $X$.

The \textbf{Lawson homology} $L_pH_k(X)$ of
$p$-cycles is defined by
$$L_pH_k(X) := \pi_{k-2p}({\cZ}_p(X))~ \hbox{for $k\geq 2p\geq 0$},$$
where ${\cZ}_p(X)$ is provided with a natural topology so that
it is an abelian topological group (cf. \cite{Friedlander1},
\cite{Lawson1} and \cite{Lawson2}). For general background, the reader is referred to
\cite{Lawson2}.

Friedlander and Mazur \cite{Friedlander-Mazur} showed that there
are  natural maps, called \textbf{cycle class maps} $\Phi_{p,k}:L_pH_{k}(X)\rightarrow H_{k}(X)$
from Lawson homology to singular homology. Denote by $L_pH_{k}(X)_{hom}$ the kernel of $\Phi_{p,k}$,
i.e., $L_pH_{k}(X)_{hom}={\rm ker}\{\Phi_{p,k}:L_pH_{k}(X)\rightarrow H_{k}(X)\}.$

Assume that $X$ is a smooth complex manifold. Let $\Omega_X^k$
the sheaf of holomorphic $k$-form on $X$. The \textbf{Deligne
complex of level p} is the complex of sheaves
$$\underline{\Z}_{\D}(p):0\rightarrow\Z(p)\rightarrow\Omega_X^0\rightarrow
\Omega_X^1\rightarrow\Omega_X^2\rightarrow\cdots\rightarrow\Omega_X^{p-1}\rightarrow
0,
$$
where $\Z(p):=(2i\pi)^p\cdot\Z\subset \C$.

The \textbf{Deligne cohomology} of $X$ in level $p$ is defined as the
hypercohomology of this complex
$ H^*_{\D}(X,\Z(p)):= \H^*(X,\underline{\Z}_{\D}(p)).$

For more details on Deligne cohomology, the reader is referred to
 \cite{Esnault-Viehweg}.

We  define a natural map from
Lawson homology to the corresponding Deligne cohomology
using the theory of differential characters  introduced by
Cheeger in \cite{Cheeger} and  the theory of D-bar sparks developed
by Harvey and Lawson in \cite{Harvey-Lawson1} and
\cite{Harvey-Lawson2}.
 This theory of differential characters has been developed in \cite{Cheeger-Simons}, \cite{Gillet-Soule}, \cite{Harris}
and systematically generalized by Harvey, Lawson and Zweck in
\cite{Harvey-Lawson-Zweck}.
This map is a generalization
of the Abel-Jacobi map for Lawson homology defined by the author in \cite{aj}.

 The main result in this paper is the following

\begin{theorem}
\label{sec:T7Th1.1} Let $X$ be a smooth complex projective variety
of dimension $n$. We have a well-defined homomorphism
$$\hat{a}_X: L_pH_{k+2p}(X)\rightarrow
H^{2(n-p)-k}_{\D}(X,\Z(n-p-k-1)),$$ given by
$$\hat{a}_X([f])=\widehat{a_f}
$$
which coincides with the generalized Abel-Jacobi map defined in
\cite{aj} when $\hat{a}$ is restricted on $L_pH_k(X)_{hom}$ and the
projection of the image of $\hat{a}$ under $\delta_2$ is the natural
map $\Phi_{p,k}$.
\end{theorem}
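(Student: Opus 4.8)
The plan is to turn a homotopy class of maps into $\mathcal{Z}_p(X)$ into a de Rham spark on $X$ in the sense of Harvey--Lawson, and then to invoke their comparison between spark classes on a smooth projective variety and Deligne cohomology. First I would represent $\alpha\in L_pH_{k+2p}(X)=\pi_k(\mathcal{Z}_p(X))$ by a pointed map $f\colon(S^k,\ast)\to(\mathcal{Z}_p(X),0)$ and, after replacing $f$ by a sufficiently regular (e.g.\ Lipschitz or piecewise-algebraic) homotopic representative, assemble the family $\{f(s)\}_{s\in S^k}$ of integral $(2p)$-currents into an integral cycle $\Gamma_f$ on $S^k\times X$ of real dimension $2p+k$; here slicing theory and the geometric structure of cycle spaces are needed, and $\Gamma_f$ is a cycle since each $f(s)$ is one and $S^k$ is closed. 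As $\Gamma_f$ is a family of codimension-$(n-p)$ algebraic cycles over the smooth base $S^k$, it carries a Harvey--Lawson spark $A_f$, i.e.\ a current on $S^k\times X$ with $dA_f=\Phi_f-\Gamma_f$, where $\Phi_f$ is a smooth form cohomologous to $\Gamma_f$ whose Hodge level is controlled by the codimension $n-p$ and the parameter dimension $k$. Slanting the associated spark class with the fundamental class $[S^k]$ — the spark-theoretic analogue of the slant product $H^{2(n-p)}(S^k\times X)\otimes H_k(S^k)\to H^{2(n-p)-k}(X)$ — produces a spark class on $X$ whose field part sits in $F^{n-p-k-1}$, and by the Harvey--Lawson--Zweck isomorphism for smooth projective $X$ this is an element of $H^{2(n-p)-k}_{\mathcal{D}}(X,\Z(n-p-k-1))$; I would take $\widehat{a_f}$ to be this class and set $\hat a_X(\alpha)=\widehat{a_f}$.

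Next I would verify that $\widehat{a_f}$ depends only on $\alpha$. Independence of the regular model of $f$ and of the spark $A_f$ is essentially internal to the spark formalism: two sparks for the same cycle whose field parts lie in the same level of the Hodge filtration differ by a spark-exact term. For independence of the homotopy class, a homotopy $H\colon S^k\times[0,1]\to\mathcal{Z}_p(X)$ produces an integral current $\Gamma_H$ on $S^k\times[0,1]\times X$ with $\partial\Gamma_H=\Gamma_{f_1}-\Gamma_{f_0}$, and feeding this bounding current through the functoriality of spark cohomology gives $\widehat{a_{f_0}}=\widehat{a_{f_1}}$. The homomorphism property follows because the topological group structure on $\mathcal{Z}_p(X)$ corresponds, under $f\mapsto\Gamma_f$, to addition of currents, and the spark assignment is additive.

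For the two compatibility assertions: the map $\delta_2$ out of Deligne cohomology records the $R$-part of a spark, i.e.\ the integral cohomology class of $\Gamma_f$ after the slant, which by construction is the Poincar\'e dual of the Friedlander--Mazur cycle class of $\alpha$ — this yields the statement about $\Phi_{p,k}$. To compare with \cite{aj}, restrict to the homologically trivial part $L_pH_{k+2p}(X)_{hom}$: there the relevant component of $[\Gamma_f]$ vanishes, so (the slant of) $\Gamma_f$ bounds, $\Phi_f$ becomes exact, and the spark class falls into the connected part of Deligne cohomology — the intermediate Jacobian — where it is computed by integrating a bounding chain of $\Gamma_f$ against forms pulled back from $X$. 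This is precisely the prescription of the Abel--Jacobi map of \cite{aj}, so the two invariants are period integrals over the same bounding chain modulo the same lattice, and therefore coincide.

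The main obstacle is the construction in the first step: defining $\Gamma_f$ for a merely continuous $f$ (approximation and slicing in cycle spaces), checking that the family admits a spark whose field part lies in exactly the claimed level of the Hodge filtration — a Hodge-theoretic statement about $k$-parameter families of codimension-$(n-p)$ algebraic cycles — and carrying this out compatibly with the Harvey--Lawson--Zweck machinery so that the output is a genuine Deligne cohomology class and not merely a coarser invariant. The homotopy-invariance step is the second delicate point, as it couples the geometric-measure-theoretic behaviour of cycle spaces with the functoriality of the spark complex.
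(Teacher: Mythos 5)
Your overall strategy---extract an integral cycle from $f$, feed it into the Harvey--Lawson spark machinery, prove homotopy invariance with a bounding current, read off $\delta_2$ as the cycle class, and compare with \cite{aj} via period integrals---is the same as the paper's. However, the step you yourself flag as ``the main obstacle'' is precisely the mathematical content of the theorem, and you leave it unresolved: you must show that the spark class actually lies in $\ker(\delta_1)\subset\widehat{\H}^{2(n-p)-k-1}(X,n-p-k-1)$, i.e.\ that the smooth part $e$ of the spark equation has no components of bidegree $(r,s)$ with $r<n-p-k-1$; asserting that the ``Hodge level is controlled by the codimension and the parameter dimension'' restates the claim rather than proving it. There is also a structural problem with where you propose to carry out this verification: you want to control Hodge types of a spark $A_f$ on $S^k\times X$, but $S^k\times X$ is not a complex manifold, so the $(r,s)$-bigrading you need is simply not available there. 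The type analysis must be performed after descending to $X$.

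The paper's resolution is concrete. One builds the current $c_f$ directly on $X$ (simplex by simplex from a PL representative, as in Section 3 of \cite{aj}), takes the explicit Hodge spark $a_f=-\Psi(d^*G(c_f))$ in the D-bar spark complex with $q=n-p-k-1$, so that $\delta_1(\widehat{a_f})=\Psi(H(c_f))$ where $H$ is the harmonic projection, and then invokes the bidegree bound $c_f\in\bigoplus_{r+s=k,\,|r-s|\leq k}\D'_{p+r,p+s}(X)$ from \cite{aj}. Since $H$ preserves bidegree, every component of $H(c_f)$ has holomorphic cohomological degree at least $n-p-k$, so its projection under $\Psi$ (which retains only degrees $<n-p-k-1$) vanishes; this is Lemma \ref{sec:T7L3.1}. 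Homotopy invariance is the same type computation applied to the bounding current $c_F$ together with Corollary 12.11 of \cite{Harvey-Lawson-Zweck}, and the identification with the Abel--Jacobi map of \cite{aj} on $L_pH_{k+2p}(X)_{hom}$ is Lemma 12.10 of \cite{Harvey-Lawson-Zweck}. Your period-integral argument for that last point is morally correct, but it too rests on the unproved type bound, so as written the proposal has a genuine gap at its central step.
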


If there is no confusion, $\hat{a}_X$ is simply denoted by $\hat{a}$.
The notations $\widehat{a_f}$ and $\delta_2$ will be defined below.

We also obtain functorial properties for $\hat{a}$, that is, it commutes with
projective morphisms between smooth complex projective varieties (cf. Proposition \ref{prop3.10}). Moreover, our construction is also applied to higher Chow groups and we obtain a natural homomorphism from higher Chow groups to Deligne cohomology (cf. Proposition \ref{prop4.4}). Our maps on higher Chow groups factors those on Lawson homology (cf. Corollary \ref{cor4.5}). However, they seem to be different from those (called regulator maps) given by Bloch \cite{Bloch}.

\section{Sparks and differential characters}\label{sec2}
In this section we review the necessary  background on  materials of sparks and
differential characters for our construction in section \ref{sec3}. For details, see
\cite{Harvey-Lawson-Zweck}, \cite{Harvey-Lawson1} and
\cite{Harvey-Lawson2}. In this section $X$ denotes a smooth manifold unless otherwise noted.


\begin{definition}
Set ${\E}^k(X):=$ the space of smooth differential forms
$k$-form on $X$ with $C^{\infty}$-topology; ${\D}^k(X):=$\{$\phi\in {\E}^k(X) |$ supp($\phi$) is compact\}.
We say the space of \emph{currents of degree $k$ (and dimension
$n-k$)} on $X$, it means the topological dual space
${\D'}^k(X)\equiv {\D'}_{n-k}(X):=\{{\D}^{n-k}(X)\}'$.
Set

${\cR}^k(X):=$ the locally {\rm rectifiable} currents of degree
k(dimension $n-k$) on $X$;

${\mathcal IF}^k(X):=$ the locally  {\rm integrally flat} currents of
degree k on $X$;
and

 ${\mathcal I}^k(X):=$ the locally  {\rm integral} currents of degree $k$ on $X$.
\end{definition}

 The following notation was firstly given in
\cite{Harvey-Lawson-Zweck}:

\begin{definition}
The space of \emph{{sparks}} of degree $k$ on $X$ is defined to
be
\footnotesize{
$$ {\mathcal{S}}^k(X):=\{s\in {\D'}^k(X)| da =\phi-R, \hbox{
where $\phi\in {\E}^{k+1}(X)$ and $R\in
{\mathcal IF}^{k+1}(X)$}\}.
$$
}
\end{definition}

\begin{definition}
For each integer $k$, $0\leq k\leq n$, the de
Rham-Federer characters of degree $k$  id defined to  be the quotient
$$ \widehat{\H}^k(X):={\mathcal{S}}^k(X)/\{d{\D'}^{k-1}(X)\}+{\mathcal IF}^{k}(X)\}
$$
\end{definition}

The equivalence class in $\widehat{\H}^k(X)$ of a spark $a\in
{\mathcal{S}}^k(X)$ will be denoted by $\hat{a}$.

It has been proved that $\phi$ and $R$ in the decomposition of $da$
above is unique (cf. \cite{Harvey-Lawson-Zweck}, Lemma 1.3). Moreover,
there are two well-defined surjective maps:
$$\delta_1:\widehat{\H}^k(X)\rightarrow {Z}_0^{k+1}(X);\quad
\delta_1(\hat{a})=\phi
$$
and
$$\delta_2:\widehat{\H}^k(X)\rightarrow
H^{k+1}(X,\Z); \quad \delta_2(\hat{a})=[R],
$$
where ${Z}_0^{k+1}(X)$ denotes the  lattice of smooth
$d$-closed, degree $k+1$ forms on $X$ with integral periods.

Now we can give the definition of Riemannian Abel-Jacobi map. Let
$X$ be compact Riemannian manifold. Any current $R$ on $X$, has  a
Hodge decomposition (cf. \cite{Harvey-Polking})
$$ R=H(R)+dd^*G(R)+d^*dG(R)
$$
where $H$ is harmonic projection and $G$ is the Green operator.
Also recall that $d$ commutes with $G$, so that if $R$ is a cycle,
then $dG(R)=0$. For $R\in {\mathcal IF}^{k+1}(X)$, set
$a(R):=-d^*G(R)$ then $$da(R)=H(R)-R$$ i.e., $a(R)$ is a {\sl
Hodge spark}. Let $\widehat{a(R)}\in \widehat{\H}^k(X)$ denote the
differential character corresponding to the Hodge spark $a(R)$.
Set
$${\rm Jac}^k(X):=H^k(X;\mathbb{R})/H^k_{\rm free}(X;\Z); \quad{\mathcal B}^{k+1}(X):=d{\mathcal IF}^k(X)
$$
then we have a well-defined map
$$\hat{a}:{\mathcal B}^{k+1}(X)\rightarrow {\rm Jac}^k(X)
$$
which is called the $k$-th {\sl Riemannian Abel-Jacobi map}.

In \cite{Harvey-Lawson1},  the concept of \textbf{homological spark
complex} and its associated \textbf{group of homological spark
classes} are given. In \cite{Harvey-Lawson2}, a generalized version
of homological spark complex has been defined.


\begin{definition}
A \textbf{homological spark complex} is a triple of cochain
complexes $(F^*,E^*,I^*)$ together with morphisms
$$\Psi:I^*\rightarrow F^*\supset E^*
$$
such that:
\begin{enumerate}
 \item $\Psi(I^k)\cap E^k=\{0\}$ for $k>0$,
 \item $H^*(E)\cong H^*(F)$, and
 \item $\Psi:I^0\rightarrow F^0$ is injective.
 \end{enumerate}
\end{definition}

\begin{definition}
\label{sec:T7def2.5} In a given spark complex $(F^*,E^*,I^*)$
a \textbf{spark of degree} k is a pair
 $$ (a,r)\in F^k\oplus I^{k+1}
$$
which satisfies the spark equation
\begin{enumerate}
 \item $da=e-\Psi(r)$ for some $e\in E^{k+1}$, and
 \item $dr=0$.
 \end{enumerate}
The group of sparks of degree k is denoted by ${\mathcal S}^k={\mathcal S}^k(F^*,E^*,I^*)$.
\end{definition}

\begin{definition}
Two sparks $(a,r),(a',r')\in {\mathcal S}^k(F^*,E^*,I^*)$ are
equivalent if there exists a pair
$$ (b,s)\in F^{k-1}\oplus I^k
$$
such that
\begin{enumerate}
 \item $a-a'=db+\Psi(s)$, and
 \item $r-r'=-ds$.
 \end{enumerate}
The set of equivalence classes is called the \textbf{group of spark
classes of degree} k associated to the given spark complex and will
be denoted by $\widehat{\H}^k(F^*,E^*,I^*)$ or simply
$\widehat{\H}^k(F)$.
\end{definition}

As usual, let $Z^k(E)=\{e\in E^k| de=0\}$ and set
$$Z^k_I(E):=\{e\in Z^k(E)|[e]=\Psi_*(\rho)~ \hbox{for some $\rho \in H^k(I)$}\}
$$
where $[e]$ denotes the class of $e$ in $H^k(E)$ (note that $de=0$).
The following lemma was proved in \cite{Harvey-Lawson-Zweck}:

\begin{lemma} \label{sec:T7L2.1}
There exist well-defined surjective homomorphisms:
$$
\delta_1: \widehat{\H}^k(F)\rightarrow Z^k_I(E)\quad and\quad
\delta_2: \widehat{\H}^k(F)\rightarrow H^{k+1}(I)
$$
given on any representing spark $(a, r)\in {\mathcal S}^k$ by
$$
\delta_1(a,r)=e \quad and \quad\delta_2(a,r)=[r]
$$
where $da=e-\Psi(r)$ as in Definition \ref{sec:T7def2.5}.
\end{lemma}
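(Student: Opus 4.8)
The plan is to check, in turn, four things: that the formulas $\delta_1(a,r)=e$ and $\delta_2(a,r)=[r]$ make sense on individual sparks, that they are unchanged under the equivalence of Definition~\ref{sec:T7def2.5} and so descend to $\widehat{\H}^k(F)$, that they are additive, and that they are surjective. The whole argument is a diagram chase in the three complexes $F^*,E^*,I^*$; the only structural inputs I will use are that $\Psi$ is a cochain map, that the isomorphism $H^*(E)\cong H^*(F)$ in condition~(2) of the definition of homological spark complex is induced by the inclusion $E^*\hookrightarrow F^*$, and the uniqueness of the form $e$ satisfying $da=e-\Psi(r)$.

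First, for a spark $(a,r)\in{\mathcal S}^k$ the form $e\in E^{k+1}$ is determined by $(a,r)$, since any two choices have the same value $da+\Psi(r)$; so $\delta_1(a,r)=e$ is unambiguous, and $\delta_2(a,r)=[r]\in H^{k+1}(I)$ makes sense because $dr=0$. Applying $d$ to the spark equation gives $de=d\Psi(r)=\Psi(dr)=0$, so $e\in Z^{k+1}(E)$; and since $e-\Psi(r)=da$ is exact in $F^*$, the class of $e$ in $H^{k+1}(F)$ equals $\Psi_*[r]$, so under the identification of condition~(2) it lies in the image of $\Psi_*\colon H^{k+1}(I)\to H^{k+1}(E)$, i.e.\ $e\in Z^{k+1}_I(E)$, as required.

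Next I would check descent and additivity. If $(a,r)\sim(a',r')$ via $(b,s)$, then $da=da'+d\Psi(s)=e'-\Psi(r')+\Psi(ds)=e'-\Psi(r)$, so uniqueness of the spark form forces $e=e'$; and $r-r'=-ds$ is a coboundary in $I^*$, so $[r]=[r']$. Hence both maps are constant on equivalence classes. Additivity is immediate, since $(a,r)+(a',r')=(a+a',r+r')$ has associated form $e+e'$, so $\delta_1$ and $\delta_2$ are homomorphisms. For surjectivity of $\delta_2$, I would take $\rho\in H^{k+1}(I)$, represent it by a cocycle $r$, note that $\Psi(r)$ is $d$-closed in $F^{k+1}$, use condition~(2) to find $e\in Z^{k+1}(E)$ with the same class in $H^{k+1}(F)$, write $e-\Psi(r)=da$, and observe that $(a,r)$ is a spark with $\delta_2(a,r)=[r]=\rho$. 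For $\delta_1$, given $e\in Z^{k+1}_I(E)$ I would use the defining property $[e]=\Psi_*\rho$ to pick a cocycle $r$ representing $\rho$; then $e$ and $\Psi(r)$ are cohomologous in $F^{k+1}$, so $e-\Psi(r)=da$ for some $a\in F^k$, and $(a,r)$ is a spark with $\delta_1(a,r)=e$.

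The lemma is essentially formal, so I do not expect a genuine obstacle. The two points that need care are that the uniqueness of $e$ is exactly what lets $\delta_1$ descend to spark classes (otherwise its value could change with the chosen representative), and that the isomorphism in condition~(2) has to be taken to be the inclusion-induced one, since both surjectivity arguments rely on every $d$-closed cohomology class of $F^*$ being represented by a form coming from $E^*$.
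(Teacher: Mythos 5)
Your proof is correct, and it is exactly the standard formal diagram chase: the paper itself gives no proof of this lemma but simply cites Harvey--Lawson--Zweck, and your argument (uniqueness of $e=da+\Psi(r)$, invariance under the spark equivalence, and surjectivity via the inclusion-induced isomorphism $H^*(E)\cong H^*(F)$) is the one found there. Note only that you have silently corrected a degree typo in the statement: since $e\in E^{k+1}$, the target of $\delta_1$ should be $Z^{k+1}_I(E)$, as you write, not $Z^k_I(E)$.
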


The following  example is the
main object which will be dealt with in the next section.
\begin{example}\label{eg2.2}
Now let $X$ be a smooth complex projective variety of  dimension $n$. Set
$$
F^m={\D}'^{m}(X,q):=\oplus_{r+s=m,r< q} {\D}'^{r,s}(X)
\quad {\rm and}\quad \bar{d}=\Psi\circ d
$$
where
$$
\Psi:{\D}'^{m}(X)\rightarrow{\D}'^{m}(X,q)
$$
 is the projection
$\Psi(a)=a^{0,m}+a^{1,m-1}+\cdots+a^{q-1,m-q+1}$.
$$
E^m={\E}^{m}(X,q):=\oplus_{r+s=m,r<q} {\E}^{r,s}(X)
\quad {\rm and}\quad \bar{d}=\Psi\circ d
$$
 and
$$
I^m={\mathcal I}^m(X)
$$

It has been shown in \cite{Harvey-Lawson2} that the above
triple $(F^*,E^*,I^*)$ is a homological spark complex . The group of
associated spark classes in degree $m$ will be denoted by
$\widehat{\H}^m(X,q)$. From this homological spark complex, one has
$$
\ker(\delta_1)=H^{m+1}_{\D}(X,\Z(q)).
$$

Denote by $H^*(X,q)$ the cohomology $H^*(E,q)\cong H^*(F,q)$ and set
$H^{*}_{\Z}(X,q):={\rm Image}\{\Psi_*:H^*(X,\Z)\to H^*(X,q)\}$.
\end{example}


 \section{The construction of the map}\label{sec3}
 In this section, $X$ denotes a smooth projective variety.
Let ${\cZ}_p(X)$ be the space of algebraic $p$-cycles with a
natural topology  and a base point, i.e., the `null' $p$-cycle
(cf. \cite{Lawson1}, \cite{Lawson2} or \cite{Friedlander1}).
Let $\Omega^k{\cZ}_p(X)$ be the $k$-th iterated loop space with the given base
point. Explicitly, we write ${\rS}^k$ as $\R^k\cup {\infty}$ and we have
$$\Omega^k{\cZ}_p(X)=\{f:{\rS}^k\rightarrow {\cZ}_p(X)|\hbox{ $f$
 is continuous with $f(\infty)=0$}\}.$$

For $k=0$, an element $f\in \Omega^0{\cZ}_p(X)$ is the difference of two map $f_1,f_2$, where
$f_i:{\rS}^0\rightarrow {\mC}_p(X)$ are maps from ${\rS}^0$ to the space of effective $p$-cycles
${\mC}_p(X)$ for $i=1,2$  (cf. \cite{Lima-Filho3} or \cite{Friedlander-Gabber}). Therefore
$\Omega^0{\cZ}_p(X)$ can be identified with the space containing differences of effective $p$-cycles,
i.e., ${\cZ}_p(X)$. The map $f$ determines an integral current $c_f=f_1(s_0)-f_2(s_0)$, where $s_0\in {\rS}^0-\{\infty\}$.
In fact $c_f$ is a closed integral current since it is an algebraic cycle on $X$. By definition,
the homology class  of $c_f$ depends only on the homotopy class  $[f]\in \pi_0{\cZ}_p(X)\cong L_pH_{2p}(X)$ of $f$,
where the isomorphism was given in \cite{Friedlander1}.
Note that $c_f\in {\D}'_{p,p}(X)$.

For $k>0$ and a continuous map $f:{\rS}^k\rightarrow {\cZ}_p(X)$,  we
can find a map $g:{\rS}^k\rightarrow {\mC}_p(X)$ such that $g$ is
homotopic to $f$ (cf. \cite{Lima-Filho3} or \cite{Friedlander-Gabber}) and
 $g$ is piecewise linear with regard to a
triangulation of ${\mC}_p(X)$.
 Hence one can define an integral current
$c_g$ over $X$. Note that this current $c_g$ is actually a cycle, i.e., $d(c_g)=0$.
 Moreover, the homology class of $c_g$ depends only on the homotopy class of $g$. The detail of the
construction of $c_g$ from the map $g:{\rS}^k\rightarrow {\cZ}_p(X)$ can be found
in   section 3 of \cite{aj}.

In any case,  we obtain an integral cycle $c_f$ from an element $f\in \Omega^k{\cZ}_p(X)$. In the following
of this section we will construct an element in a suitable Deligne cohomology group from the integral cycle $c_f$.

Consider Example \ref{eg2.2}  with
$m=2(n-p)-k-1,q=n-p-k-1$. All the following argument will focus on
this homological spark complex.

\begin{definition}\label{def3.1}
 Set $a_f:= -\Psi (d^*G(c_f))$. Then $(a_f,c_f)$ is called the
Hodge spark of the map $f:{\rS}^k\rightarrow {\cZ}_p(X)$, where $G$ is the Green's operator.
Let $$\widehat{a_f}\in \widehat{\H}^{2(n-p)-k-1}(X,n-p-k-1)$$ be the
differential character corresponding to the Hodge spark
$(a_f,c_f)$.
\end{definition}

\begin{remark}
 In Definition \ref{def3.1}, we need to choose a Riemannian metric of $X$. However,
$\widehat{a_f}$ is independent of the choice of such a metric (cf. \cite{Harvey-Lawson-Zweck}, p. 830.).
\end{remark}

\begin{lemma}
{\label{sec:T7L3.1} If $f\in \Omega^k{\cZ}_p(X)$, then
$\delta_1(\widehat{a_f})=0\in
\widehat{\H}^{2(n-p)-k-1}(X,n-p-k-1)$.}
\end{lemma}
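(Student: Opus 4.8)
The statement to prove is that $\delta_1(\widehat{a_f}) = 0$ for the differential character associated to the Hodge spark $(a_f, c_f)$ of a map $f \in \Omega^k\cZ_p(X)$. By Lemma \ref{sec:T7L2.1}, $\delta_1(\widehat{a_f}) = e$, where $e \in E^{m+1}$ is the unique smooth form in the spark equation $\bar{d}\, a_f = e - \Psi(c_f)$ determined by the Hodge-theoretic decomposition. So the task reduces to unwinding what $e$ actually is and showing it vanishes in $E^{m+1} = \E^{m+1}(X, q)$ with $m = 2(n-p)-k-1$, $q = n-p-k-1$.

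The plan is as follows. First I would recall the Hodge decomposition of the integral cycle $c_f$: since $c_f$ is $d$-closed, $dG(c_f) = 0$, so $c_f = H(c_f) + dd^* G(c_f)$, and with $a_f = -\Psi(d^* G(c_f))$ one computes in the full (non-truncated) current complex that $d(-d^*G(c_f)) = H(c_f) - c_f$. Applying the projection $\Psi$ and using $\bar{d} = \Psi \circ d$, one gets $\bar{d}\, a_f = \Psi(H(c_f)) - \Psi(c_f)$, so that $e = \Psi(H(c_f))$, the harmonic projection of $c_f$ composed with the truncation $\Psi$ onto $\oplus_{r+s = m+1,\, r < q} \E^{r,s}(X)$. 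Thus $\delta_1(\widehat{a_f}) = \Psi(H(c_f))$, and it remains to show this is zero, i.e., that the harmonic representative of $c_f$ has no $(r,s)$-components with $r < q$.

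The key point is a Hodge-type degree count. The cycle $c_f$ is built from an algebraic $p$-cycle, or rather from a map of an $\R^k$-sphere into the cycle space, so $c_f \in \D'_{p+k, p+k}(X)$ — equivalently it has bidegree $(n-p-k, n-p-k)$ as a current of degree $2(n-p-k)$. Wait: I need to track this carefully. The degree of $c_f$ as a current is $2(n-p)-k$ — not $m+1 = 2(n-p)-k$; good, that matches, since $e \in E^{m+1}$. So the harmonic projection $H(c_f)$ is a harmonic form of degree $2(n-p)-k$. The crux is that $c_f$, coming from algebraic cycles swept out by an $S^k$ with $k$ of the sphere-directions "real", carries a Hodge-theoretic constraint: its harmonic part lives in Hodge pieces $(r,s)$ with $r \geq n-p-k$ and $s \geq n-p-k$ (this is the content of the analysis in \cite{aj}, essentially because an algebraic $p$-cycle has a harmonic representative of type $(n-p,n-p)$ and adding $k$ real parameters can only lower each of $r, s$ by at most the relevant amount, or more precisely the current $c_f$ pairs trivially against forms outside this bidegree range). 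Since $q = n-p-k-1 < n-p-k$, every surviving Hodge component of $H(c_f)$ has $r \geq n-p-k > q$, so the truncation $\Psi$, which keeps only $r < q$, annihilates $H(c_f)$ entirely. Hence $e = \Psi(H(c_f)) = 0$.

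The main obstacle I anticipate is justifying the bidegree constraint on $H(c_f)$ rigorously — namely that the harmonic projection of the integral cycle $c_f$ associated to $f \in \Omega^k\cZ_p(X)$ has $(r,s)$-components only with $\min(r,s) \geq n-p-k$. For $k = 0$ this is classical (an algebraic $p$-cycle is homologous to something of Hodge type $(n-p,n-p)$, so its harmonic part is of pure type $(n-p,n-p)$). For $k > 0$ one must appeal to the geometric construction of $c_g$ from a PL map $g : S^k \to \mC_p(X)$ given in section 3 of \cite{aj}: $c_g$ is assembled from pieces $\sigma \times (\text{algebraic } p\text{-cycle})$ where $\sigma$ ranges over $k$-simplices, so it is a sum of currents each of which is (an algebraic $p$-cycle) integrated over a real $k$-dimensional family; such a current, tested against a smooth $(r,s)$-form, can be nonzero only when the form restricts nontrivially to the $(p,p)$-part in the fiber directions, forcing $r \leq n-p$ and $s \leq n-p$ on the complementary side and, after the degree bookkeeping for the $k$ real parameters, $r, s \geq n-p-k$. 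I would isolate this as the single technical lemma, prove it by the pairing argument against test forms using the explicit description of $c_f$ from \cite{aj}, and then the vanishing of $\delta_1(\widehat{a_f})$ follows immediately from the degree inequality $q < n-p-k$.
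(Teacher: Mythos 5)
Your proposal is correct and takes essentially the same route as the paper: both identify $\delta_1(\widehat{a_f})$ with $\Psi(H(c_f))$ via the Hodge decomposition of the closed current $c_f$, invoke the bidegree constraint $c_f\in\bigoplus_{r+s=k,\,|r-s|\leq k}{\D}'_{p+r,p+s}(X)$ coming from the construction in \cite{aj} (which the paper simply cites as Remark 3.3 there, whereas you propose to reprove it by pairing against test forms), and conclude that the truncation $\Psi$ onto components of holomorphic degree $<q=n-p-k-1$ annihilates $H(c_f)$ for type reasons.
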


 \begin{proof} By definition of $\delta_1$, we have $\delta_1(\widehat{a_f})=\widehat{\Psi(H(c_f))}$,
where $H(c_f)$ is the harmonic part of $c_f$. Note that
$H(c_f)$ and $c_f$ are of the same $(*,*)$-type. Moreover, from
the construction of $c_f$, we have
 $$c_f\in
\bigoplus_{r+s=k,|r-s|\leq k} {\D}'_{p+r,p+s}(X)$$
(cf. \cite{aj}, Remark 3.3)

 By the type
reason, the projection of $H(c_f)$ under $\Psi$ on
$$\bigoplus_{r+s=k,r> k+1} {\D}'_{p+r,p+s}(X)
$$ is zero. This is exactly the image of $\widehat{a_f}$ under
$\delta_1$.
\end{proof}

Hence we get an element $\widehat{a_f}\in \ker(\delta_1)=
H^{2(n-p)-k}_{\D}(X,\Z(n-p-k-1))$. Therefore we get a map
$$\hat{a}: \Omega^k{\cZ}_p(X) \rightarrow
H^{2(n-p)-k}_{\D}(X, \Z(n-p-k-1))$$
which is defined by $\hat{a}(f)=\widehat{a_f}$.

If the map $f$ is homotopically trivial, then we have the following:
\begin{lemma}
 \label{sec:T7L3.2} If $f:{\rS}^k\rightarrow {\cZ}_p(X)$ is
homotopically trivial, then $\hat{a}(f)=0$.
\end{lemma}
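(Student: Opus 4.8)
The plan is to exhibit, in the homological spark complex of Example~\ref{eg2.2} (taken with $m=2(n-p)-k-1$ and $q=n-p-k-1$), an explicit equivalence between the Hodge spark $(a_f,c_f)$ of Definition~\ref{def3.1} and the zero spark $(0,0)$; by the very definition of the group of spark classes this gives at once $\hat a(f)=\widehat{a_f}=0$. Concretely, the task is to produce a pair $(b,s)\in F^{m-1}\oplus I^m$ with
$$a_f=\bar d\,b+\Psi(s)\qquad\text{and}\qquad c_f=-d\,s .$$

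The first step is to manufacture a bounding current. Since $f$ is homotopically trivial, fix a null-homotopy $F\colon\rS^k\times[0,1]\to\cZ_p(X)$ (rel the base point) with $F|_{\rS^k\times\{0\}}=f$ and $F|_{\rS^k\times\{1\}}=0$. Carrying out the construction of $c_f$ from Section~3 of \cite{aj} one dimension higher --- replace $F$ up to homotopy by a piecewise linear map into $\mC_p(X)$ relative to a triangulation of $\rS^k\times[0,1]$ and pass to the associated integral current --- produces an integral current $c_F\in\mathcal I^m(X)$ of real dimension $2p+k+1$ with $d\,c_F=c_f$ (for a suitable orientation of $\rS^k\times[0,1]$, noting $\partial(\rS^k\times[0,1])=\pm(\rS^k\times\{1\}-\rS^k\times\{0\})$) and --- exactly as in \cite{aj}, Remark~3.3 --- with every component lying in $\bigoplus_{r+s=k+1}{\D}'_{p+r,p+s}(X)$. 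I expect this step to be the main obstacle: one must check integrality of $c_F$, the boundary identity, and above all the bidegree restriction. Everything after it is Hodge-theoretic bookkeeping running parallel to, indeed a ``primitive'' of, the proof of Lemma~\ref{sec:T7L3.1}.

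Granting $c_F$, the bidegree restriction gives two vanishings by the same mechanism as in Lemma~\ref{sec:T7L3.1}: each component of $c_F$ has holomorphic degree $n-p-r$ with $r\le k+1$, hence $\ge q$, which is precisely the range that $\Psi$ annihilates, so $\Psi(c_F)=0$; and since the harmonic projection $H$ preserves bidegree, $\Psi(H(c_F))=0$ as well. Now set $s:=-c_F\in I^m$ and $b:=\Psi\bigl(d^{*}G\,c_F\bigr)\in F^{m-1}$, with $G$ the Green operator. Then $-d\,s=d\,c_F=c_f$. For the remaining identity, $G$ commutes with $d$, so $a_f=-\Psi(d^{*}G\,c_f)=-\Psi(d^{*}dG\,c_F)$; substituting the Hodge--Green decomposition $c_F=H(c_F)+dd^{*}G\,c_F+d^{*}dG\,c_F$ (cf.\ \cite{Harvey-Polking}) together with the two vanishings yields $a_f=\Psi(dd^{*}G\,c_F)$. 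Finally $\Psi\circ d\circ(\mathrm{id}-\Psi)=0$ on currents --- a component of holomorphic degree $\ge q$ stays of holomorphic degree $\ge q$ under $d$ and is then killed by $\Psi$ --- so $\bar d\,b=\Psi\,d\,\Psi(d^{*}G\,c_F)=\Psi(dd^{*}G\,c_F)=a_f$, and since $\Psi(s)=-\Psi(c_F)=0$ we obtain $a_f=\bar d\,b+\Psi(s)$. Therefore $(a_f,c_f)$ and $(0,0)$ differ by $\bigl(\bar d\,b+\Psi(s),\,-d\,s\bigr)$, hence represent the same spark class, and $\hat a(f)=0$.
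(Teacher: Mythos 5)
Your proof is correct and follows essentially the same route as the paper's: both extend $f$ to a null-homotopy, form the bounding integral current $c_F$ with $d\,c_F=c_f$ and components concentrated in $\bigoplus_{r+s=k+1}{\D}'_{p+r,p+s}(X)$, and conclude by observing that $\Psi$ (and hence $\Psi\circ H$) annihilates everything in that bidegree range. The only difference is that where the paper invokes Corollary 12.11 of \cite{Harvey-Lawson-Zweck} to reduce $\widehat{a_f}$ to the class of $H(\Psi(c_F))$, you verify the spark equivalence $(a_f,c_f)\sim(0,0)$ by hand via the Hodge--Green decomposition of $c_F$ and the identity $\Psi\,d\,\Psi=\Psi\,d$, which is a correct, self-contained substitute for that citation.
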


\begin{proof}
Let $F:D^{k+1}\rightarrow {\cZ}_p(X)$ be an extension of
$f$, i.e., $F|_{\partial(D^{k+1})}=f$. Let $c_F$ be the current over
$X$ defined by $F$. As showed in Lemma \ref{sec:T7L3.1}, $c_F$ is an
integral current with boundary $\partial(c_F)=c_f$. By
Corollary 12.11 in \cite{Harvey-Lawson-Zweck}, we have
$\widehat{a_f}=\widehat{ H(\Psi(c_F))}$. Since
$$c_F\in\bigoplus_{r+s=k+1,|r-s|\leq k+1} {\D}'_{p+r,p+s}(X),$$
the projection of $H(c_F)$ under $\Psi$ on
$$\bigoplus_{r+s=k,r> k+1} {\D}'_{p+r,p+s}(X)$$
is zero. Note that $\Psi$ commutes with the Laplace operator, we
have $\widehat{a_f}=0$.
\end{proof}

By the Lemma \ref{sec:T7L3.1} and Lemma \ref{sec:T7L3.2}, we have a
well-defined map
$$\hat{a}: L_pH_{k+2p}(X)\rightarrow
H^{2(n-p)-k}_{\D}(X,\Z(n-p-k-1)),$$ given by
$$\hat{a}([f])=\widehat{a_f}.
$$
Recall that the Deligne cohomology can be written as the middle
part of a short exact sequence
{\tiny
$$0\rightarrow \frac{H^{2(n-p)-k-1}(X,n-p-k-1)}{H^{2(n-p)-k-1}(X,\Z)}\rightarrow
H^{2(n-p)-k}_{\D}(X,\Z(n-p-k-1))\rightarrow
\ker(\Psi_*)\rightarrow 0,
$$}
where
$$H^{2(n-p)-k-1}(X,n-p-k-1)=\bigg\{\bigoplus_{r+s=2(n-p)-k-1,r<n-p-k-1}H^{r,s}(X)\bigg\}$$
 and
$$\ker(\Psi_*)= H^{2(n-p)-k}(X,\Z)\cap \bigg\{\bigoplus_{r+s=2(n-p)-k,|r-s|\leq k+2}
H^{r,s}(X)\bigg\}.
$$

\begin{proposition}
\label{sec:T7Prop3.1} The restriction of the above map to
$$L_pH_{k+2p}(X)_{hom}:= \ker\{ \Phi_{p,k+2p}:L_pH_{k+2p}(X)\rightarrow H_{k+2p}(X,\Z)\}$$
is the generalized Abel-Jacobi map defined in \cite{aj} if we
identify the $H^{r,s}(X)$ with $\{H^{n-r,n-s}(X)\}^*$ for all $0\leq
r,s\leq n$ and $H^{q}(X,\Z)$ with $H_{2n-q}(X,\Z)$.
\end{proposition}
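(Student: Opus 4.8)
The plan is to unwind both constructions explicitly on a representing cycle and check they produce the same element of the intermediate Jacobian. Let me describe the two sides.

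On the one hand, for $[f]\in L_pH_{k+2p}(X)_{hom}$ we have an integral cycle $c_f$ whose homology class is zero, so $H(c_f)=0$. The Hodge spark is $(a_f,c_f)$ with $a_f=-\Psi(d^*G(c_f))$, and since the class $\delta_2(\widehat{a_f})=[c_f]=0$, the short exact sequence above tells us $\widehat{a_f}$ lies in the subgroup $H^{2(n-p)-k-1}(X,n-p-k-1)/H^{2(n-p)-k-1}(X,\Z)$, i.e.\ in the intermediate Jacobian. On the other hand, the generalized Abel-Jacobi map of \cite{aj} is defined by choosing an integral current $c_F$ with $\partial c_F=c_f$ (which exists because $c_f$ is homologically trivial) and integrating against forms: it sends $[f]$ to the functional $\omega\mapsto \int_{c_F}\omega$ modulo periods, on forms $\omega$ of the appropriate bidegree type.

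\medskip

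First I would recall from \cite{aj} the precise formula for the generalized Abel-Jacobi map and identify its target via the stated Poincar\'e dualities $H^{r,s}(X)\cong\{H^{n-r,n-s}(X)\}^*$ and $H^q(X,\Z)\cong H_{2n-q}(X,\Z)$; the bidegree constraint $|r-s|\le k+2$ on $c_f$ (Remark 3.3 of \cite{aj}) is what makes the relevant summand of cohomology match the summand appearing in the Deligne description. Second, I would express $\widehat{a_f}$ as a functional on closed forms: a spark class in the kernel of $\delta_1$ and $\delta_2$ acts on a closed form $\omega$ by $\langle \widehat{a_f},[\omega]\rangle = \int_X a_f\wedge\omega \pmod{\Z}$, and one uses $da_f=-\Psi(H(c_f))+\Psi(c_f)=\Psi(c_f)$ together with $\partial c_F = c_f$ to rewrite $\int_X a_f\wedge\omega$ in terms of $\int_{c_F}\omega$. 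Concretely, the standard identity $\int_X d^*G(c_f)\wedge\omega = \int_X G(c_f)\wedge d\omega + (\text{harmonic correction}) = \int_{c_F}\omega \pmod{\text{periods}}$ (valid since $\omega$ is closed and $c_f=\partial c_F$) is the computational heart; the projection $\Psi$ only discards the bidegree components of $\omega$ that are automatically killed by the type of $c_f$, so it does not change the pairing on the relevant forms. Third, I would check that the ambiguity (choice of $c_F$, choice of metric, the quotient by $H^{2(n-p)-k-1}(X,\Z)$) on both sides is literally the same lattice of periods, so that the two resulting elements of the intermediate Jacobian agree.

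\medskip

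The main obstacle I expect is bookkeeping of the bidegree and duality conventions: matching the index ranges $r+s=2(n-p)-k-1$, $r<n-p-k-1$ appearing in the Deligne/spark description with the Hodge-theoretic target of the map in \cite{aj} after dualizing, and verifying that the projection operator $\Psi$ in the definition of $a_f$ genuinely acts as the identity when paired against the forms that survive (those dual to the bidegrees actually carried by $c_f$). Once the type analysis is pinned down—essentially the same computation that powers Lemma \ref{sec:T7L3.1} and Lemma \ref{sec:T7L3.2}—the identification is a formal consequence of the defining integral formula for the differential character $\widehat{a_f}$ and Stokes' theorem applied to $c_F$. The analytic input (Hodge decomposition, $d^*G$ being a chain homotopy) is already packaged in the Harvey--Lawson--Zweck formalism recalled above, so no new estimates are needed.
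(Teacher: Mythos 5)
Your proposal is correct and follows essentially the same route as the paper: both reduce the statement to identifying the differential character of the Hodge spark $(a_f,c_f)$, for homologically trivial $c_f=\partial\tilde c$, with the integration functional $\omega\mapsto\int_{\tilde c}\omega$ modulo periods, after matching bidegrees and targets via the stated Poincar\'e dualities. The only difference is that the paper outsources the computational heart to Lemma 12.10 of \cite{Harvey-Lawson-Zweck}, whereas you propose to carry it out by hand; if you do, phrase the key step via the Hodge decomposition $\tilde c=H(\tilde c)+dd^*G(\tilde c)+d^*dG(\tilde c)$, which gives $-d^*G(c_f)=-\tilde c+H(\tilde c)+dd^*G(\tilde c)$ and hence equality of spark classes, rather than as an adjointness of $d^*$ against the wedge pairing (your displayed ``standard identity'' is not literally correct as written, since $d^*=\pm *d*$ is adjoint for the $L^2$ inner product, not for $\int_X(\cdot)\wedge\omega$).
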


\begin{proof}
 Recall the definition that
$$\Phi:L_pH_{k+2p}(X)_{hom}\rightarrow \bigg\{\bigoplus_{{r> k+1,r+s=k+1}}
H^{p+r,p+s}(X)\bigg\}^*\bigg/H_{2p+k+1}(X,\Z)$$ is given by
$\Phi([f])=\Phi_f$, where $\Phi_f(\omega)=\int_{\tilde{c}}\omega
\quad ({\rm mod} \quad H_{2p+k}(X,\Z))$ with
$\partial(\tilde{c})=c_f$. The Lemma 12.10 in
\cite{Harvey-Lawson-Zweck} implies that the two constructions
coincide.
\end{proof}

\begin{remark}\label{sec:Rem3.4}
{ It is easy to see from the definition of $\Psi_*$  that the image of $\Phi_{p,{k+2p}}$ is in
$\ker(\Psi_*)$. Hence the natural map
$\Phi_{p,{k+2p}}:L_pH_{k+2p}(X)\rightarrow H_{k+2p}(X)\}$ factors through $\hat{a}$
and the map $\delta_2$ in the first $3\times3$ grid given in
\cite{Harvey-Lawson2}, p.26.}
\end{remark}

\begin{remark}
{ Gillet and Soul$\acute{e}$ \cite{Gillet-Soule} first showed
that the Griffiths' higher Abel-Jacobi map coincides with the
Riemannian Abel-Jacobi. Harris \cite{Harris} also discussed some
related topics.}
\end{remark}

In summary, we have the following

\begin{theorem}
{\label{sec:T7Th3.1} Let $X$ be a smooth complex projective variety
of dimension $n$. We have a well-defined map
$$\hat{a}: L_pH_{k+2p}(X)\rightarrow
H^{2(n-p)-k}_{\D}(X,\Z(n-p-k-1)),$$ given by
$$\hat{a}([f])=\widehat{a_f}
$$
which coincides with the generalized Abel-Jacobi map defined in
\cite{aj} when $\hat{a}$ is restricted on $L_pH_{k+2p}(X)_{hom}$; and the
projection of the image of $\hat{a}$ under $\delta_2$ is the natural
map $\Phi_{p,k}$. }
\end{theorem}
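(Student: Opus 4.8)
The plan is simply to assemble into a single statement the facts already established in this section. First I would note that the assignment $f\mapsto\widehat{a_f}$ defines a map $\hat{a}\colon\Omega^k{\cZ}_p(X)\to H^{2(n-p)-k}_{\D}(X,\Z(n-p-k-1))$. Indeed, by Definition \ref{def3.1} the pair $(a_f,c_f)$ is a spark of degree $2(n-p)-k-1$ in the homological spark complex of Example \ref{eg2.2} taken with $m=2(n-p)-k-1$ and $q=n-p-k-1$, so $\widehat{a_f}\in\widehat{\H}^{2(n-p)-k-1}(X,n-p-k-1)$; by Lemma \ref{sec:T7L3.1} one has $\delta_1(\widehat{a_f})=0$, and by Example \ref{eg2.2} the kernel of $\delta_1$ is exactly $H^{2(n-p)-k}_{\D}(X,\Z(n-p-k-1))$, so $\widehat{a_f}$ lands in the desired group.

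Next I would check that $\hat{a}$ descends to a homomorphism on $L_pH_{k+2p}(X)=\pi_k{\cZ}_p(X)$, using the identification of \cite{Friedlander1}. Since ${\cZ}_p(X)$ is an abelian topological group, so is $\Omega^k{\cZ}_p(X)$, and two elements represent the same class in $\pi_k{\cZ}_p(X)$ exactly when their difference is homotopically trivial. The current assignment $f\mapsto c_f$ is additive, the Hodge spark operation $c\mapsto-\Psi(d^*G(c))$ is linear because $d^*$ and $G$ are linear and $\Psi$ is a projection, and the passage from sparks to spark classes is a group homomorphism; hence $\hat{a}$ is additive on $\Omega^k{\cZ}_p(X)$. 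Combining this with Lemma \ref{sec:T7L3.2}, which gives $\hat{a}(f)=0$ for homotopically trivial $f$, the map $\hat{a}$ factors through $\pi_k{\cZ}_p(X)$ and is a homomorphism with $\hat{a}([f])=\widehat{a_f}$, proving the first assertion.

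The two remaining assertions are then direct invocations of earlier results. That the restriction of $\hat{a}$ to $L_pH_{k+2p}(X)_{hom}$ coincides with the generalized Abel-Jacobi map of \cite{aj} --- after the Poincar\'e-type identifications $H^{r,s}(X)\cong\{H^{n-r,n-s}(X)\}^*$ and $H^{q}(X,\Z)\cong H_{2n-q}(X,\Z)$ --- is precisely Proposition \ref{sec:T7Prop3.1}, whose proof rests on Lemma 12.10 of \cite{Harvey-Lawson-Zweck}. That the projection of the image of $\hat{a}$ under $\delta_2$ is the cycle class map $\Phi_{p,k+2p}$ is the content of Remark \ref{sec:Rem3.4}: by Lemma \ref{sec:T7L2.1}, $\delta_2(\widehat{a_f})=[c_f]$ in $H^{2(n-p)-k}(X,\Z)\cong H_{k+2p}(X,\Z)$, which is $\Phi_{p,k+2p}([f])$ by the Friedlander--Mazur construction \cite{Friedlander-Mazur}, and the type bound $c_f\in\bigoplus_{r+s=k,\,|r-s|\leq k}{\D}'_{p+r,p+s}(X)$ shows this class lies in $\ker(\Psi_*)$, so the factorization is compatible with the short exact sequence displayed above.

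Essentially everything is quoted, so I expect no real difficulty; the only step deserving care is the additivity/descent argument of the second paragraph, where one must confirm that the construction of $c_f$ from \cite{aj} (which involves replacing $f$ by a piecewise-linear homotopic map $g$) is compatible with the group operation on $\Omega^k{\cZ}_p(X)$, i.e., that $c_{f+f'}$ is homologous to $c_f+c_{f'}$, so that homotopy-triviality of $f-f'$ really does force $\hat{a}(f)=\hat{a}(f')$.
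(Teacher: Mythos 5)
Your proposal is correct and follows essentially the same route as the paper: the paper's own proof simply cites Lemma \ref{sec:T7L3.1} and Lemma \ref{sec:T7L3.2} for well-definedness, Proposition \ref{sec:T7Prop3.1} for the agreement with the generalized Abel--Jacobi map, and Remark \ref{sec:Rem3.4} for the factorization of $\Phi_{p,k}$ through $\delta_2$. Your extra care about additivity of $f\mapsto c_f$ and the descent through $\pi_k{\cZ}_p(X)$ is a sensible elaboration of a point the paper leaves implicit, not a different argument.
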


\begin{proof} The first statement follows from Lemma \ref{sec:T7L3.1} and \ref{sec:T7L3.2} that
$\hat{a}$ is well-defined. The second one follows from Proposition \ref{sec:T7Prop3.1} and the the third
one follows from \ref{sec:Rem3.4}.
\end{proof}

\begin{remark}
The map $\hat{a}$ can be nontrivial even if restricted on $L_pH_{k+2p}(X)_{hom}$.
This has been shown by Griffiths \cite{Griffiths} for $k=0$ and by the author
\cite{aj} for $k>0$.
\end{remark}

In this section below,  we will  show that the homomorphism $\hat{a}$ is well-behaviored with projective morphisms between
smooth projective varieties.

Recall that Friedlander and Lawson have defined  morphic cohomology groups  $L^pH^k(X)$ for all $k\leq 2p$.
Moreover,  they  have defined a duality between morphic
cohomology groups $L^pH^k(X)$ with Lawson homology groups $L_{n-p}H_{2n-k}(X)$
for a projective variety $X$ (cf. \cite{Friedlander-Lawson}, \cite{Friedlander-Lawson2}).

\begin{theorem}[\cite{Friedlander-Lawson2}]\label{duality}
If $X$ is smooth projective of $\dim X=n$, then the duality
$$\mathcal{D}: L^pH^k(X)\to L_{n-p}H_{2n-k}(X)$$
is an isomorphism for all $k\leq 2p$.
\end{theorem}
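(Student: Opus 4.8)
This is the Friedlander--Lawson duality theorem, and since it is quoted from \cite{Friedlander-Lawson2} we only indicate how one would prove it. Recall that $L^pH^k(X)=\pi_{2p-k}(\mathcal{Z}^p(X))$, where $\mathcal{Z}^p(X)$ is the topological abelian group of codimension-$p$ algebraic cocycles on $X$, and that $L_{n-p}H_{2n-k}(X)=\pi_{(2n-k)-2(n-p)}(\mathcal{Z}_{n-p}(X))=\pi_{2p-k}(\mathcal{Z}_{n-p}(X))$; in particular $k\le 2p$ is precisely the range in which both homotopy groups are defined. The map $\mathcal{D}$ is induced by a continuous homomorphism of topological abelian groups $\mathcal{Z}^p(X)\to\mathcal{Z}_{n-p}(X)$ which assigns to a cocycle its associated cycle; for $X$ smooth this may be described, up to homotopy, as cap product with the fundamental class $[X]\in L_nH_{2n}(X)$, equivalently as the slant product against the class of the diagonal $\Delta_X\subset X\times X$. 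The plan is to show that this homomorphism of topological groups is a homotopy equivalence, which then yields the asserted isomorphism on all the homotopy groups $\pi_{2p-k}$ simultaneously.

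To prove the homotopy equivalence I would proceed by localization together with induction on $n=\dim X$. First, both assignments $U\mapsto\mathcal{Z}^p(U)$ and $U\mapsto\mathcal{Z}_{n-p}(U)$ convert the decomposition of $X$ along a smooth closed subvariety into a homotopy fibration sequence, and $\mathcal{D}$ is a morphism of such sequences: for cycle spaces this is the localization theorem for spaces of algebraic cycles (see \cite{Friedlander-Gabber, Lima-Filho3}), and for cocycle spaces it follows from the corresponding localization statement together with the Lawson suspension theorem. Next, combining these localization sequences with the projective bundle formula and the blow-up formula for both morphic cohomology and Lawson homology -- all natural with respect to $\mathcal{D}$ -- one reduces the statement for $X$ to the same statement for smooth projective varieties of smaller dimension, and finally to the base case $X=\mathbb{P}^n$. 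For $X=\mathbb{P}^n$ both $L^pH^k(\mathbb{P}^n)$ and $L_{n-p}H_{2n-k}(\mathbb{P}^n)$ are known explicitly from the Lawson suspension theorem and Friedlander's computations for projective space -- each is $\mathbb{Z}$ or $0$ in the relevant bidegrees -- and $\mathcal{D}$ sends a generating linear subspace to the dual generating linear subspace, so the base case is a direct verification.

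The step I expect to be the main obstacle is the one underlying this whole reduction: in order to define $\mathcal{D}$ at the level of spaces rather than merely on homotopy groups, and in order to compare the two families of localization sequences, one must intersect arbitrary continuous algebraic families of cycles with a fixed subvariety (the diagonal, or coordinate linear subspaces) continuously and with control on degrees. This is furnished by the Friedlander--Lawson moving lemma for families of cycles of bounded degree: any algebraic family of $r$-cycles on a smooth projective $X$ can be deformed, through an explicit algebraic homotopy built from the action of a linear group on a projective embedding, into a family in good position with respect to a prescribed closed subset, the degrees remaining bounded in terms of the original family. Granting this moving lemma, the intersection pairings on spaces of cycles and cocycles become well defined and continuous, the localization fibration sequences above become available and compatible with $\mathcal{D}$, the inductive reduction goes through, and the explicit computation on $\mathbb{P}^n$ finishes the proof; the complete argument is in \cite{Friedlander-Lawson2}.
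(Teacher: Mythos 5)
The paper offers no proof of this statement: it is quoted directly from Friedlander--Lawson \cite{Friedlander-Lawson2}, so there is nothing internal to the paper to compare your sketch against. Judged against the actual argument in \cite{Friedlander-Lawson2}, your sketch does get the two essential points right: the duality map is induced by an honest continuous homomorphism of topological cycle groups (the graphing construction, which includes the space of cocycles --- cycles on $X\times\mathbb{A}^p$ equidimensional of relative dimension $0$ over $X$ --- into the space of all cycles of the appropriate dimension, followed by homotopy invariance), and the entire theorem rests on the Friedlander--Lawson moving lemma for families of cycles of bounded degree.

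However, the architecture you build around the moving lemma is not theirs, and it contains a genuine gap. Friedlander and Lawson do not argue by localization plus induction on $\dim X$ with base case $\mathbb{P}^n$: once the moving lemma is available, it is applied \emph{directly} to show that every compact family of cycles in $\mathcal{Z}_{n}(X\times\mathbb{A}^p)\simeq \mathcal{Z}_{n-p}(X)$ (and every family of homotopies between such) can be deformed into the subspace of cycles equidimensional over $X$; this at once makes the inclusion of cocycles into cycles a weak equivalence, with no induction at all. Your proposed reduction would instead require (i) duality-compatible localization sequences for the complements $X\setminus Z$, which are only quasi-projective --- and duality for smooth quasi-projective varieties is a later and harder extension of this theorem, not an available input --- and (ii) a procedure connecting an arbitrary smooth projective $X$ to $\mathbb{P}^n$ by projective bundles and blow-ups, which no blow-up formula supplies: weak factorization only relates birational models, and $X$ need not be rational. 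So the step ``the inductive reduction goes through'' would fail as written; the correct use of the moving lemma is direct, not inductive.
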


\begin{proposition}\label{prop3.10}
Let $\varphi:X\to Y$ be a projective morphism between smooth projective varieties $X$ and $Y$. Then we have the following
commutative diagram
$$
\xymatrix{&L^{p}H^{2p-k}(X)\ar[r]^-{\bar{a}_X}&H^{2p-k}\sb{\D}(X,\Z(p-k-1))&\\
&L^{p}H^{2p-k}(Y)\ar[u]^{\varphi^*}\ar[r]^-{\bar{a}_Y}&H^{2p-k}\sb{\D}(Y,\Z(p-k-1)).\ar[u]^{\varphi^*}&},
$$
where $\bar{a}_X= \hat{a}_X\circ\mathcal{D}$ and similarly for $\bar{a}_Y$.
\end{proposition}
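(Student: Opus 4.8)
The plan is to combine the naturality of the Friedlander--Lawson duality with the functoriality of the homological spark complex of Example~\ref{eg2.2}; the two features that make this work are that $\varphi$ is \emph{holomorphic}, so that its pullback preserves the $(*,*)$-bidegree and hence commutes with the projection $\Psi$, and that $\widehat{a_f}$ is a spark \emph{class}, so the metric-dependent ingredients $d^{*},G,H$ of Definition~\ref{def3.1} need not be functorial on the nose. First I would pass from morphic cohomology to Lawson homology: the duality $\mathcal{D}$ of Theorem~\ref{duality} is natural, i.e.\ writing $m=\dim Y$ one has $\mathcal{D}_{X}\circ\varphi^{*}=\varphi^{!}\circ\mathcal{D}_{Y}$, where $\varphi^{!}\colon L_{m-p}H_{2m-2p+k}(Y)\to L_{n-p}H_{2n-2p+k}(X)$ is the Gysin map of the l.c.i.\ morphism $\varphi$ (cf.\ \cite{Friedlander-Lawson2}, \cite{Friedlander-Lawson}). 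Since $\hat{a}_{X}$ and $\hat{a}_{Y}$ both target Deligne cohomology of degree $2p-k$ and level $q=p-k-1$, the square of the Proposition becomes equivalent to the identity $\varphi^{*}_{\mathcal{D}}\circ\hat{a}_{Y}=\hat{a}_{X}\circ\varphi^{!}$, where $\varphi^{*}_{\mathcal{D}}$ is the ordinary pullback on Deligne cohomology.

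Next I would make $\varphi^{!}$ explicit on the integral cycles $c_{f}$. Factoring $\varphi=\pi\circ\gamma$ with $\gamma\colon X\hookrightarrow X\times Y$ the graph embedding and $\pi\colon X\times Y\to Y$ the projection, one has $\varphi^{!}=\gamma^{!}\circ\pi^{*}$; for $f\in\Omega^{k}\mathcal{Z}_{m-p}(Y)$ the current $\pi^{*}c_{f}$ is the exterior product $[X]\times c_{f}$ on $X\times Y$, and $\gamma^{!}([X]\times c_{f})$ is its refined intersection with $\Gamma_{\varphi}\cong X$. After moving $f$ inside its homotopy class to a representative for which this intersection is transverse --- possible by the moving lemma built into the construction of morphic cohomology and its duality with Lawson homology --- one obtains an honest integral cycle $c'$ on $X$ that is again of the form $c_{f''}$ for a suitable $f''\in\Omega^{k}\mathcal{Z}_{n-p}(X)$ representing $\varphi^{!}[f]$. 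Since $\pi^{*}$ and $\gamma^{!}$ are of pure bidegree, $c'$ satisfies the same bidegree bound as $c_{f}$ (\cite{aj}, Remark~3.3), so we remain in the complex of Example~\ref{eg2.2}, and pullback of smooth forms together with the Gysin pullback of integral currents defines a morphism of homological spark complexes $(F^{*}_{Y},E^{*}_{Y},I^{*}_{Y})\to(F^{*}_{X},E^{*}_{X},I^{*}_{X})$ that commutes with $d$ and, by holomorphy, with $\Psi$, and that realizes $\varphi^{*}_{\mathcal{D}}$.

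It then remains to check that this morphism of spark complexes sends the Hodge spark $(a_{f},c_{f})=(-\Psi d^{*}Gc_{f},\,c_{f})$ to a spark equivalent to the Hodge spark $(a_{f''},c')$ of $c'$: both have $I$-component $c'$ and, by Lemma~\ref{sec:T7L3.1} applied on $X$ and on $Y$, vanishing $\delta_{1}$-component, so the uniqueness in the spark equation together with the metric-independence of spark classes (\cite{Harvey-Lawson-Zweck}) forces them to represent the same class in $H^{2p-k}_{\mathcal{D}}(X,\Z(p-k-1))$; this gives $\hat{a}_{X}\circ\varphi^{!}=\varphi^{*}_{\mathcal{D}}\circ\hat{a}_{Y}$ and hence the Proposition. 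I expect the main obstacle to be the middle step: identifying the refined Gysin pullback $\varphi^{!}$, at the level of currents, with the operation $f\mapsto c_{f}$, that is, proving the transverse intersection with $\Gamma_{\varphi}$ is of the form $c_{f''}$ representing $\varphi^{!}[f]$ and is independent of the chosen transversalization. This is essentially a current-theoretic reformulation of Friedlander--Lawson naturality; if one instead quotes their compatibility of duality with pullbacks directly, only the (routine) analytic bookkeeping of the last paragraph remains.
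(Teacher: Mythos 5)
Your overall route is the same as the paper's: pass through the Friedlander--Lawson duality, use its naturality to replace $\varphi^{*}$ on morphic cohomology by the Gysin map on Lawson homology, and then reduce the commutativity of the square to a statement about the currents $c_{f}$ and the Hodge sparks built from them. Your middle paragraph (graph factorization, transversality, $\varphi^{!}[f]=[\tilde{\varphi}^{*}\circ f]$ at the level of cycle spaces) is a more detailed version of what the paper disposes of by citing Peters and Section~3 of \cite{aj}, and the identity $\varphi^{*}(c_{f})=c_{\tilde{\varphi}^{*}\circ f}$ is exactly the reduction the paper makes.

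The gap is in your last paragraph. Two sparks $(\varphi^{*}a_{f},c')$ and $(a_{f''},c')$ with the same $I$-component and the same $E$-component $e=0$ are \emph{not} forced to be equivalent: their difference $\varphi^{*}a_{f}-a_{f''}$ is $\bar{d}$-closed, and the obstruction to equivalence is its class in $H^{2p-k-1}(X,p-k-1)/H^{2p-k-1}_{\Z}(X,p-k-1)$ --- precisely the ``Jacobian'' part of $H^{2p-k}_{\D}(X,\Z(p-k-1))$, which is the nontrivial part of the whole statement. The ``uniqueness in the spark equation'' (Lemma~1.3 of \cite{Harvey-Lawson-Zweck}) only says that $e$ and $r$ are determined by $a$, not that $a$ is determined up to equivalence by $(e,r)$; and the metric-independence result you invoke compares the Hodge sparks of one fixed current with respect to two metrics on the \emph{same} manifold, which is not the comparison you need between a pulled-back Hodge spark on $X$ and the intrinsic Hodge spark of the pulled-back current. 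What is actually required is an identity of the form $\varphi^{*}d^{*}_{Y}G_{Y}(c_{f})-d^{*}_{X}G_{X}(\varphi^{*}c_{f})\in dF^{2p-k-2}(X)+\Psi({\mathcal I}^{2p-k-1}(X))$, i.e.\ a Green-operator/adjoint computation in the spirit of the metric-independence proof but genuinely different from it. The paper itself closes this step by simply asserting that $\varphi^{*}$ commutes with $\Psi$, $d^{*}$ and $G$ (and then reducing to $\varphi^{*}(c_{f})=c_{\tilde{\varphi}^{*}\circ f}$); your version correctly flags that this commutation cannot be taken literally, but the formal argument you substitute for it does not close the gap.
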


\begin{proof} Let $n=\dim(Y)$. Given $\alpha\in L^{p}H^{2p-k}(Y)$, we set  $$\beta=\mathcal{D}(\alpha)\in L_{n-p}H_{2(n-p)+k}(Y)$$ and then choose a PL map $f:\rS^k\to \cZ_{n-p}(Y)$ such that $[f]=\beta$, 
where $[f]$ denotes the homotopy class of $f$.
Then we have $\bar{a}_Y(\alpha)=\hat{a}_Y([f])$.

 Let $c_f$ be the current constructed for $f$. On one hand, we have
\begin{equation}\label{eq1}
\varphi^*(\bar{a}_Y(\alpha))=\varphi^*(\hat{a}_Y\circ \mathcal{D}(\alpha))=\varphi^*(\bar{a}_Y([f]))=\varphi^*(-\Psi d^*G(c_f)).
\end{equation}

On the other hand,
\begin{equation}\label{eq2}
\begin{array}{llll}
\bar{a}_X(\varphi^*(\alpha))&=&\hat{a}_X(\mathcal{D}\varphi^*(\alpha))&\\
&=&\hat{a}_X(\varphi^*\mathcal{D}(\alpha))&\\
&=&\hat{a}_X(\varphi^*([f]))&\\
&=&\hat{a}_X([\tilde{\varphi}^*\circ f]),&\hbox{(cf. \cite{Peters}, p.218)}\\
&=&-\Psi d^*G(c_{\tilde{\varphi}^*\circ f}),&
\end{array}
\end{equation}
where $\tilde{\varphi}^*$ is Gysin homomorphism associated to the projective morphism $\varphi:X\to Y$ (cf. \cite{Friedlander-Gabber} for $\varphi$ a regular embedding, \cite{Peters} for any projective morphism).

 Note that $\varphi^*$ commutes with $\Psi$, $d^*$ and the Green operator $G$. by comparing Equation (\ref{eq1}) and (\ref{eq2}),  it is enough to show that
$$
\varphi^*(c_f)=c_{\tilde{\varphi}^*\circ f}
$$
which follows from the construction of $c_f$ and $c_{\tilde{\varphi}^*\circ f}$ (cf. \cite{aj}, Section 3). This completes the proof of the proposition.
\end{proof}

\section {Relationships to other Abel-Jacobi type constructions}
In this section we discuss relations between our construction to other Abel-Jacobi type constructions,
e.g., the morphic Abel-Jacobi map constructed by M. Walker \cite{Walker}
and the Abel-Jacobi map for higher chow groups by M. Kerr,  J. Lewis and  S. M\"{u}ller-Stach  \cite{KLM}.

From  the construction of the homomorphism $\hat{a}$ in the last section,
we see that it essentially divided into two main steps.  First, for an element $\alpha \in L_pH_{k+2p}(X)$, we can find a PL
map $f:\rS^k\to \cZ_p(X)$ such that its homotopy class $[f]=\alpha$,
and  construct an integral cycle $c_f$ on $X$.
Then we associate this integral cycle a Hodge spark $\widehat{a_f}$ and show this spark is
in the suitable target space, i.e, the Deligne cohomology $H^{2(n-p)-k}_{\D}(X,\Z(n-p-k-1))$.

For the second step a Hodge decomposition is needed for currents on $X$.
Compact Riemannian manifolds are such examples.

The construction  in the first step depends on what we are interested in. In section \ref{sec3},
we deal with those integral currents from elements of Lawson homology. It can be applied to other cases.

Applying Example \ref{eg2.2} to $m=2(n-p)-k-1,q=n-p-k$, we get

\begin{proposition}\label{prop4.1}
Set $$\Omega^k(\cZ_p(X))_{htp\sim 0}:=\{f\in \Omega^k(\cZ_p(X))| \hbox{$f$ is PL and homotopically trivial} \}.$$
Set $b_f:= -\Psi_0 (d^*G(c_f))$, where  $\Psi_0$ denotes the projection corresponding to  $q=n-p-k$ in
Example \ref{eg2.2}. Then $(b_f,c_f)$ is the  Hodge spark of the map $f:{\rS}^k\rightarrow {\cZ}_p(X)$
and denote by  $\widehat{b_f}\in \widehat{\H}^{2(n-p)-k-1}(X,n-p-k)$ its associated
differential character.

Then we have a natural homomorphism
$$\hat{b}:\Omega^k(\cZ_p(X))_{htp\sim 0}\rightarrow
H^{2(n-p)-k-1}(X,n-p-k)/H^{2(n-p)-k-1}_{\Z}(X,n-p-k),$$ given by
$$\hat{b}([f])=\widehat{b_f}.
$$
\end{proposition}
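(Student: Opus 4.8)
The plan is to mimic, essentially verbatim, the construction already carried out in Section \ref{sec3} for the map $\hat{a}$, but now with the projection $\Psi_0$ attached to the level $q=n-p-k$ rather than $q=n-p-k-1$, and to restrict attention to homotopically trivial PL maps so that the target group collapses to the quotient $H^{2(n-p)-k-1}(X,n-p-k)/H^{2(n-p)-k-1}_{\Z}(X,n-p-k)$. First I would note that for $f\in\Omega^k(\cZ_p(X))_{htp\sim 0}$ the integral cycle $c_f$ constructed in Section \ref{sec3} is available and, being the boundary of an integral current $c_F$ (for $F:D^{k+1}\to\cZ_p(X)$ extending $f$), lies in $\mathcal{B}^{*}(X)=d\mathcal{IF}^{*}(X)$ in the sense appropriate to the spark complex of Example \ref{eg2.2} with $q=n-p-k$. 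Then $(b_f,c_f)$ with $b_f=-\Psi_0(d^*G(c_f))$ satisfies the spark equation $\bar d\,b_f=\Psi_0(H(c_f))-\Psi_0(c_f)$, exactly as in Definition \ref{def3.1}, so $\widehat{b_f}\in\widehat{\H}^{2(n-p)-k-1}(X,n-p-k)$ is a bona fide spark class, and independence of the Riemannian metric follows from \cite{Harvey-Lawson-Zweck} as recorded in the remark after Definition \ref{def3.1}.

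Next I would run the two lemmas of Section \ref{sec3} in this new level. The analogue of Lemma \ref{sec:T7L3.1} shows $\delta_1(\widehat{b_f})=\widehat{\Psi_0(H(c_f))}$; using the type restriction $c_f\in\bigoplus_{r+s=k,|r-s|\le k}\D'_{p+r,p+s}(X)$ from \cite{aj}, Remark 3.3, and the fact that the Laplacian preserves $(*,*)$-type, the component of $H(c_f)$ surviving under $\Psi_0$ (which keeps bidegrees $(r,s)$ with $r\ge k+2$ after dualizing) vanishes, so $\delta_1(\widehat{b_f})=0$ and hence $\widehat{b_f}\in\ker(\delta_1)=H^{2(n-p)-k}_{\D}(X,\Z(n-p-k))$. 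Because $f$ is homotopically trivial, the analogue of Lemma \ref{sec:T7L3.2} — invoking Corollary 12.11 in \cite{Harvey-Lawson-Zweck} — gives $\widehat{b_f}=\widehat{H(\Psi_0(c_F))}$, and then the type count for $c_F\in\bigoplus_{r+s=k+1,|r-s|\le k+1}\D'_{p+r,p+s}(X)$ forces $\delta_2(\widehat{b_f})=[c_f]=0$ as well. Thus $\widehat{b_f}$ lies in $\ker(\delta_1)\cap\ker(\delta_2)$, which by the standard short exact sequence for Deligne cohomology is precisely the subquotient $H^{2(n-p)-k-1}(X,n-p-k)/H^{2(n-p)-k-1}_{\Z}(X,n-p-k)$; this identifies the target.

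Finally I would check that $\widehat{b_f}$ depends only on the class of $f$, i.e. that $\hat b$ descends to a homomorphism on the indicated group of homotopy-triviality data. Additivity in $f$ is immediate from the linearity of $c_f\mapsto -\Psi_0(d^*G(c_f))$ and the additivity of the current construction $f\mapsto c_f$ (cf. \cite{aj}, Section 3). For well-definedness, if $f$ and $f'$ are PL homotopic through maps into $\cZ_p(X)$, the homotopy produces an integral current $c_H$ with $\partial c_H=c_f-c_{f'}$, and the argument of Lemma \ref{sec:T7L3.2} applied to $c_H$ shows $\widehat{b_f}-\widehat{b_{f'}}=0$ in the quotient target. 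I expect the main obstacle to be purely bookkeeping: tracking the bidegree ranges through the shift $q=n-p-k-1\leadsto q=n-p-k$ and making sure the type-vanishing arguments in the analogues of Lemmas \ref{sec:T7L3.1} and \ref{sec:T7L3.2} still leave nothing behind under $\Psi_0$ — in particular that the $\delta_2$-vanishing really does hold, which is what pins the image inside the small quotient rather than the full Deligne group. There is no genuinely new idea here beyond what Section \ref{sec3} already supplies; the proposition is the ``$\delta_2=0$'' shadow of Theorem \ref{sec:T7Th3.1}.
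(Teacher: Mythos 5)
Your proposal follows the paper's proof in all essentials: one shows $\delta_1(\widehat{b_f})=0$ by the same type-decomposition argument as in Lemma \ref{sec:T7L3.1} (with the range $r\geq k+1$ replacing $r>k+1$), and then shows $\delta_2(\widehat{b_f})=0$, so that $\widehat{b_f}$ lies in $\ker\delta_1\cap\ker\delta_2$, which is the stated quotient $H^{2(n-p)-k-1}(X,n-p-k)/H^{2(n-p)-k-1}_{\Z}(X,n-p-k)$.

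One caution on your $\delta_2$ step: in the paper the vanishing comes solely from the fact that $\Psi_0(c_f)=\partial(\Psi_0(c_F))$ is a boundary, so $\delta_2(\widehat{b_f})=[\Psi_0(c_f)]=0$; it is \emph{not} forced by the type count on $c_F$. Indeed, the type argument of Lemma \ref{sec:T7L3.2} deliberately fails at level $q=n-p-k$ (the extreme bidegree components of $c_F$ with $|r-s|=k+1$ survive $\Psi_0$), which is exactly why $\widehat{b_f}$ itself need not vanish and the map $\hat b$ is of interest; as written, your phrasing risks proving too much. For the same reason, your closing claim that the Lemma \ref{sec:T7L3.2} argument applied to a homotopy current $c_H$ gives homotopy invariance of $\hat b$ is unwarranted --- but the paper does not assert this either: the domain of $\hat b$ is the set of PL homotopically trivial maps, not their homotopy classes, so this step is not needed.
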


\begin{proof}
What we need to show is $\delta_1(\widehat{b_f})=0\in \widehat{\H}^{2(n-p)-k-1}(X,n-p-k)$ since
$\ker(\delta_1)=H^{2(n-p)-k}_{\D}(X,\Z(n-p-k))$.
 By definition of $\delta_1$, we have $\delta_1(\widehat{b_f})=\widehat{\Psi_0(H(c_f))}$, where
$H(c_f)$ is the harmonic part of $c_f$. From the construction of $c_f$, we have
$$c_f\in \bigoplus_{r+s=k,|r-s|\leq k} {\D}'_{p+r,p+s}(X).$$

Note that $H(c_f)$ and $c_f$ are of the same $(*,*)$-type.
By the type reason, the projection of $H(c_f)$ under $\Psi_0$ on
$$\bigoplus_{r+s=k,r\geq k+1} {\D}'_{p+r,p+s}(X)
$$ is zero. This is exactly the image of $\widehat{b_f}$ under
$\delta_1$.

So we have a  homomorphism
$$\hat{b}:\Omega^k(\cZ_p(X))_{htp\sim 0}\rightarrow
H^{2(n-p)-k}_{\D}(X,\Z(n-p-k)),$$ given by
$$\hat{b}([f])=\widehat{b_f}.
$$

To compete the proof of the proposition, we need to show that $\delta_2(\widehat{b_f})=0$ since
$$\ker \delta_1\cap \ker \delta_2=H^{2(n-p)-k-1}(X,n-p-k)/H^{2(n-p)-k-1}_{\Z}(X,n-p-k).$$

From the definition of $\delta_2$, we have $\delta_2(\widehat{b_f})=[\Psi_0(c_f)]$, where $[c]$ denotes
the homological class of the integral cycle $c$. From the proof of Lemma \ref{sec:T7L3.2}, $c_f$ is the boundary of
an integral current $c_F$. Since the projection $\Psi_0$ commutes with the boundary map, we have $\Psi_0(c_f)=\partial (\Psi_0(c_F))$
and hence $[\Psi_0(c_f)]=0$. That is to say, $\delta_2(\widehat{b_f})=0$.
\end{proof}

Sometimes we denote by $J^k(X,p):=H^k(X,p)/H^k_{\Z}(X,p)$.  In propostion \ref{prop4.1},
those $c_f$  is called linearly equivalent to zero  if if $\widehat{b_f}=0$. Set
$\Omega^k(\cZ_p(X))_{0}:=\{f\in \Omega^k(\cZ_p(X))_{htp\sim 0}| \widehat{b_f}=0\}$. Therefore, we get an injection (also called $\hat{b}$)
on the quotient $$\overline{\Omega^k(\cZ_p(X))_{htp\sim 0}}:=\Omega^k(\cZ_p(X))_{htp\sim 0}/\Omega^k(\cZ_p(X))_{0}.$$
That is

$$
\hat{b}:\overline{\Omega^k(\cZ_p(X))_{htp\sim 0}}\longrightarrow J^{2(n-p)-k-1}(X,n-p-k).
$$

\begin{remark}
When $k=0$, the construction here concides with the Griffiths' Abel-Jacobi map on the space of algebraic cycles $p$-cycles which are algebraic equivalent to zero.
 (cf. \cite{Harvey-Lawson-Zweck}, Example 12.16).
\end{remark}

\begin{remark}
 The construction here for $k=0$ is different from the morphic Abel-Jacobi map constructed by Walker \cite{Walker}. On the one hand, we require $X$ is a \emph{smooth} complex \emph{projective} variety but there is no such requirement for these conditions in the definition.
On the other hand, at least in the case that $L_pH_{2p+1}(X)_{hom}\neq 0$, the morphic Jacobian $J^{mor}_p(X)$ defined in \cite{Walker} is different form $J^{2(n-p)-1}(X,n-p)$ defined above.
\end{remark}

For a smooth complex projective varieties $X$, S. Bloch \cite{Bloch}  introduced higher Chow groups  and constructed  regulator maps
$$
c\sb{p,l}:{\rm CH}^{p}(X,l)\rightarrow H^{2p-l}\sb{\D}(X,\Z(p))
$$
from the higher Chow groups to Deligne cohomology.  An explicit description of the restriction of $c_{p,l}$ on the homologically trivial part is given  by integration currents in \cite{KLM}. Here we descript a map from ${\rm CH}^{p}(X,l)$ to a different Deligne cohomology by using the system method in \cite{Harvey-Lawson-Zweck}.

Recall that (cf. \cite{Bloch}) for each $m\geq 0$, let
$$
\Delta[d]:=\{t\in \C^{d+1}|\sum_{i=0}^{m} t_i=1\}\cong \C^d.
$$
and let $z^l(X,d)$ denote the free abelian group generated by irreducible subvarieties of codimension-$l$ on $X\times \Delta[d]$ which
meets $X\times F$ in proper dimension for each face $F$ of $\Delta[d]$. Using intersection and pull-back of algebraic cycles,  we can define face and degeneracy relations and obtain a simplical abelian group structure for . Let $|z^l(X,*)|$ be the geometric realization of $z^l(X,*)$.  Then the higher Chow group is defined as
$$
\Ch^l(X,k):=\pi_k(|z^l(X,*)|).
$$

For $\alpha\in \Ch^l(X,k)$, let $f:\rS^k\to |z^l(X,*)|$ be a PL representative of $\alpha$. Since $\rS^k$ is compact, there is
an integer $d\geq 0$ such that $f(\rS^k)\subset \Delta[d]\times z^l(X,d)$. Note that for each $s\in \rS^k$, $f(s)$ is a codimension-$l$ algebraic cycles in $X\times \Delta[d]$ with certain additional restriction. As in section \ref{sec3},  we can construct an integral current $c_f$ on $X\times\Delta[d]$. Set $C_f:=pr_{X*}(c_f)$, where $pr_X$ is the projection on $X$. Note that $C_f$ is an integral current of suitable dimension since elements in $z^q(X,d)$ are required to intersect faces  properly.

Again  we consider Example \ref{eg2.2}  for $m=2p-k-1,q=p-k-1$. Set $A_f:=-\Psi (d^*G(C_f))$ and
Let $\widehat{A_f}\in \widehat{\H}^{2(n-p)-k-1}(X,n-p-k-1)$ be the
differential character corresponding to the Hodge spark $(A_f,C_f)$.

\begin{proposition}\label{prop4.4} Let $X$ be a smooth projective variety of dimension $n$.
 There is a homomorphism
$$\hat{A}\sb{p,k}:{\rm CH}^{p}(X,k)\rightarrow H^{2p-k}\sb{\D}(X,\Z(p-k-1)) $$
defined by
$$
\hat{A}\sb{p,k}([f])=\widehat{A_f}.
$$
\end{proposition}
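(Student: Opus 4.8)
The plan is to mimic the argument of Theorem \ref{sec:T7Th3.1} (equivalently, Lemmas \ref{sec:T7L3.1} and \ref{sec:T7L3.2}), with the extra bookkeeping coming from pushing forward along $pr_X$. First I would check that the assignment $f \mapsto C_f = pr_{X*}(c_f)$ is compatible with the spark formalism: the current $c_f$ on $X\times\Delta[d]$ is an integral cycle (exactly as in section \ref{sec3}, using the PL representative $g$ homotopic to $f$), and $pr_{X*}$ sends integral cycles to integral cycles, so $C_f$ is a well-defined integral cycle on $X$ of the expected dimension — here the properness condition built into $z^l(X,*)$ is what guarantees the pushforward has no dimension drop and is integral. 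So $(A_f, C_f)$ with $A_f = -\Psi(d^*G(C_f))$ is a genuine Hodge spark in the complex of Example \ref{eg2.2} with $m = 2p-k-1$, $q = p-k-1$, and $\widehat{A_f}$ is a well-defined element of $\widehat{\H}^{2(n-p)-k-1}(X, n-p-k-1)$ for each chosen PL representative $f$.

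Next I would show $\delta_1(\widehat{A_f}) = 0$, so that $\widehat{A_f}$ actually lands in $\ker(\delta_1) = H^{2p-k}_{\D}(X,\Z(p-k-1))$. As in Lemma \ref{sec:T7L3.1}, $\delta_1(\widehat{A_f}) = \widehat{\Psi(H(C_f))}$, and $H(C_f)$ has the same $(*,*)$-type as $C_f$; the point is to record the Hodge types of $C_f$. The cycle $c_f$ on $X\times\Delta[d]$ lies in $\bigoplus_{r+s=k,\,|r-s|\le k}\D'_{*,*}(X\times\Delta[d])$ by the construction recalled in section \ref{sec3} (cf.\ \cite{aj}, Remark 3.3); since $pr_X$ is a holomorphic submersion, $pr_{X*}$ preserves this bidegree constraint, so $C_f$ lies in $\bigoplus_{r+s=k,\,|r-s|\le k}\D'_{p+r',p+s'}(X)$ for the appropriate shifted indices. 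The projection $\Psi$ onto the part with holomorphic degree $\ge q+1 = p-k$ then kills $H(C_f)$ for the usual type reason, giving $\delta_1(\widehat{A_f})=0$.

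Then I would verify independence of the choices. There are two choices: the PL representative $f$ of $\alpha$, and the integer $d$ with $f(\rS^k)\subset z^l(X,d)$. Enlarging $d$ only changes $c_f$ by a degeneracy, which is absorbed by $pr_{X*}$, so $C_f$ is unchanged up to the spark equivalence; and a homotopy between two PL representatives $f_0, f_1$ produces, exactly as in Lemma \ref{sec:T7L3.2}, an integral current $c_F$ on $X\times\Delta[d]$ (for $d$ large) with $\partial c_F = c_{f_1} - c_{f_0}$, whence $\partial(pr_{X*}c_F) = C_{f_1} - C_{f_0}$, and Corollary 12.11 of \cite{Harvey-Lawson-Zweck} together with the type computation above forces $\widehat{A_{f_1}} = \widehat{A_{f_0}}$. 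Additivity in $\alpha$ follows since $c_f$ and hence $C_f$ are additive in the cycle data, and the differential character construction is linear. This gives the well-defined homomorphism $\hat{A}_{p,k}([f]) = \widehat{A_f}$.

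I expect the main obstacle to be the pushforward step: one must be careful that $pr_{X*}$ genuinely produces an \emph{integral} current of the correct dimension and that it commutes suitably with $d^*$ and $G$ (which, strictly, it does not on the nose — $pr_{X*}$ commutes with $d$ and hence with harmonic projection and the Green operator only after one fixes product metrics and uses that $\Delta[d]$ contributes trivially to the relevant cohomology), and that the properness hypothesis in the definition of $z^l(X,*)$ is exactly what is needed to make $C_f$ behave. Making precise the bidegree bookkeeping for $C_f$ on $X$ from that of $c_f$ on $X\times\Delta[d]$, so that the $\delta_1$-vanishing goes through with the stated level $q = p-k-1$, is the other point that needs care; everything else is a routine transcription of section \ref{sec3}.
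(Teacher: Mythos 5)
Your proposal follows essentially the same route as the paper: the paper's proof is a two-line remark that the argument is ``word for word'' the same as Lemmas \ref{sec:T7L3.1} and \ref{sec:T7L3.2}, i.e.\ the type argument for $\delta_1(\widehat{A_f})=0$ and the homotopy/boundary argument for well-definedness, which is exactly what you carry out. You are in fact more careful than the paper about the points it elides --- the integrality and dimension of $pr_{X*}(c_f)$ and the bidegree bookkeeping after pushforward --- so no changes are needed.
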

\begin{proof}
Word for word to Lemma \ref{sec:T7L3.1}, we show that $\delta_1(\widehat{A_f})=0= \delta_2(\widehat{A_f})$. Similar to Lemma \ref{sec:T7L3.2}, $\widehat{A_f}$ is well-defined on ${\rm CH}^{p}(X,k)$.
\end{proof}

The combination of Theorem \ref{sec:T7L3.1} and Proposition \ref{prop4.4} gives us the following result.

\begin{corollary}\label{cor4.5}
 The range of  $\hat{A}\sb{p,k}$ constructed here  is different from the regulator map constructed by Bloch. From the construction,
we have a commutative diagram:
$$
\xymatrix{&\Ch^{p}(X,k)\ar[r]^-{FG}\ar[dr]^{\hat{A}\sb{p,k}}&L_{n-p}H_{2(n-p)+k}(X)\ar[d]^{\hat{a}}&\\
&&H^{2p-k}\sb{\D}(X,\Z(p-k-1))&},
$$
 where $FG$ is the Friedlander-Gabber homomorphism defined in \cite{Friedlander-Gabber}, Section 4.
\end{corollary}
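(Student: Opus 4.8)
The plan is as follows. The claim that the range of $\hat{A}_{p,k}$ differs from Bloch's regulator is immediate on comparing Tate twists: Bloch's map $c_{p,l}$ lands in $H^{2p-l}_{\D}(X,\Z(p))$, whereas $\hat{A}_{p,k}$ lands in $H^{2p-k}_{\D}(X,\Z(p-k-1))$, and for $k\ge 0$ the weight $p-k-1$ is strictly smaller than $p$, so the two target groups are in general distinct. Thus the substance of the corollary is the commutativity of the triangle, i.e.\ the identity $\hat{A}_{p,k}=\hat{a}\circ FG$ on $\Ch^{p}(X,k)$. First note that applying $\hat{a}$ to a class in $L_{n-p}H_{2(n-p)+k}(X)$ is exactly the map of Section \ref{sec3} with the role of $p$ played by $n-p$; hence, by Example \ref{eg2.2}, both $\hat{a}\circ FG$ and $\hat{A}_{p,k}$ are built from the same pair $(m,q)=(2p-k-1,\,p-k-1)$ and land in the same Deligne cohomology group. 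It therefore suffices to match the representing Hodge sparks, and for that it suffices to match the integral currents that enter them.

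Next I would set up the comparison. Fix $\alpha\in\Ch^{p}(X,k)$ and a PL representative $f:\rS^k\to|z^p(X,*)|$ with $f(\rS^k)\subset\Delta[d]\times z^p(X,d)$ for some $d$; let $c_f$ be the integral current on $X\times\Delta[d]$ built as in Section \ref{sec3} (following section 3 of \cite{aj}) and $C_f=pr_{X*}(c_f)$. I would then recall from \cite{Friedlander-Gabber} that $FG$ is induced, on homotopy groups of the geometric realizations, by a map of cycle spaces that is compatible with the face and degeneracy maps and whose effect on the integral currents attached as in section 3 of \cite{aj} is precisely pushforward of currents along $pr_X:X\times\Delta[d]\to X$; the proper-intersection conditions defining $z^p(X,d)$ are what guarantee that this descends correctly and that the resulting currents have the right dimension. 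Consequently $FG(\alpha)\in L_{n-p}H_{2(n-p)+k}(X)$ admits a PL representative $g:\rS^k\to\cZ_{n-p}(X)$ whose associated integral current satisfies $c_g=pr_{X*}(c_f)=C_f$ (the dimension count checks out: $c_g$ has real dimension $2(n-p)+k$).

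Granting $c_g=C_f$, the conclusion is formal. The Hodge sparks coincide,
$$a_g=-\Psi\bigl(d^{*}G(c_g)\bigr)=-\Psi\bigl(d^{*}G(C_f)\bigr)=A_f,$$
where $G$ is the Green operator and $\Psi$ the projection of Example \ref{eg2.2} for $q=p-k-1$; since $\delta_1$ vanishes on both sides (by Lemma \ref{sec:T7L3.1} for $[g]$, and by the argument of Proposition \ref{prop4.4} for $\alpha$), the two sparks represent the same class in $\ker(\delta_1)=H^{2p-k}_{\D}(X,\Z(p-k-1))$, so
$$\hat{a}\bigl(FG(\alpha)\bigr)=\hat{a}([g])=\widehat{a_g}=\widehat{A_f}=\hat{A}_{p,k}(\alpha),$$
which is the commutativity of the triangle.

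I expect the main obstacle to be the identity $c_g=pr_{X*}(c_f)$, together with pinning down $FG$ at the level of cycle spaces (and the associated currents) on the nose, not merely up to homotopy. Concretely one must check: (i) the map of \cite{Friedlander-Gabber} that induces $FG$ is a well-defined continuous simplicial homomorphism $z^p(X,\bullet)\to$ (singular complex of $\cZ_{n-p}(X)$), so that after realization it is $FG$ on $\pi_*$ --- this is essentially \cite{Friedlander-Gabber}, Section 4; and (ii) the triangulations underlying $f$ and $g$, together with the inductive simplex-by-simplex construction of $c_f$ and $c_g$ from section 3 of \cite{aj}, can be arranged so that $pr_{X*}$ carries the data defining $c_f$ to the data defining $c_g$. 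Point (ii) is geometric-measure-theoretic bookkeeping of the same flavour as, and no harder than, the verification $\varphi^{*}(c_f)=c_{\tilde\varphi^{*}\circ f}$ in the proof of Proposition \ref{prop3.10}; once it is in place the remaining assertions are immediate.
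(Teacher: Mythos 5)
Your proposal is correct and takes essentially the same route as the paper: the paper's entire justification is that the corollary ``follows from the construction'' (combining Lemma~\ref{sec:T7L3.1} and Proposition~\ref{prop4.4}), and your argument is a detailed unwinding of exactly that --- reduce commutativity of the triangle to the identity of integral currents $c_g=pr_{X*}(c_f)=C_f$, after which equality of the Hodge sparks and hence of the spark classes in $\ker(\delta_1)$ is formal, with the statement about the range being the immediate comparison of Tate twists. You supply considerably more detail than the paper and correctly isolate the one point it leaves implicit, namely the compatibility of the Friedlander--Gabber map at the level of cycle spaces with the current construction of Section~\ref{sec3}.
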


\begin{center}{\bf
Acknowledgement}\end {center} I would like to express my gratitude to
my former advisor, Blaine Lawson,  for  all his help. I also  thank the referee
for detailed suggestions and corrections on the first version of the paper.

\end{document}